\documentclass[12pt]{amsart}
\pdfoutput=1
\usepackage[margin=1in]{geometry}
\usepackage[utf8]{inputenc}
\usepackage{graphicx, cite}
\usepackage{amsmath,amssymb}
\usepackage{amsthm}
\usepackage{color}
\usepackage{tikz}
\usepackage[utf8]{inputenc}
\usepackage{pgfplots}
\pgfplotsset{compat=1.18}
\usetikzlibrary{fillbetween}
\allowdisplaybreaks

\makeatletter
\def\l@section{\@tocline{1}{12pt plus2pt}{0pt}{}{\bfseries}}
\def\l@subsection{\@tocline{2}{0pt}{2pc}{2pc}{}}
\makeatother
\makeatletter
\def\subsection{\@startsection{subsection}{2}{\z@}%
	{-3.25ex\@plus -1ex \@minus -.2ex}%
	{1.5ex \@plus .2ex}%
	{\normalfont\bfseries\boldmath}}
\def\subsubsection{\@startsection{subsubsection}{3}%
	\z@{.5\linespacing\@plus.7\linespacing}{-.5em}%
	{\normalfont\bfseries\boldmath}}
\renewcommand\paragraph{\@startsection{paragraph}{4}{\z@}%
	{3.25ex \@plus1ex \@minus.2ex}%
	{-1em}%
	{\normalfont\normalsize\bfseries}}
\makeatother

\theoremstyle{plain}
\newtheorem{thm}{Theorem}[section]

\newtheorem{lem}[thm]{Lemma}
\newtheorem{prop}[thm]{Proposition}

\theoremstyle{definition}

\theoremstyle{remark}
\newtheorem{rem}[thm]{Remark}

\theoremstyle{plain}

\numberwithin{equation}{section}

\theoremstyle{plain} % just in case the style had changed
\newcommand{\thistheoremname}{}
\newtheorem{genericthm}[thm]{\thistheoremname}

  \newtheorem*{genericthm*}{\thistheoremname}
\newenvironment{namedthm*}[1]
  {\renewcommand{\thistheoremname}{#1}%
   \begin{genericthm*}}
  {\end{genericthm*}}

\newcommand{\R}{{\mathbb R}}
\newcommand{\SSS}{{\mathbb S}}

\newcommand{\Z}{{\mathbb Z}}

\newcommand{\im}{{\rm Im}\,}
\newcommand{\re}{{\rm Re}\,}

\newcommand{\supp}{{\textnormal{supp}}}

\makeatletter
\newcommand{\vast}{\bBigg@{4}}
\newcommand{\Vast}{\bBigg@{5}}
\makeatother

\def\udot#1{\ifmmode\oalign{$#1$\crcr\hidewidth.\hidewidth
    }\else\oalign{#1\crcr\hidewidth.\hidewidth}\fi}

\def\R{\mathbb{R}}
\def\Z{\mathbb{Z}}

\def\beq{\begin{equation}}
\def\eeq{\end{equation}}
\makeatletter
\newcommand{\doublewidetilde}[1]{{%
  \mathpalette\double@widetilde{#1}%
}}
\newcommand{\double@widetilde}[2]{%
  \sbox\z@{$\m@th#1\widetilde{#2}$}%
  \ht\z@=.9\ht\z@
  \widetilde{\box\z@}%
}
\makeatother

\def\one{\mbox{1\hspace{-4.25pt}\fontsize{12}{14.4}\selectfont\textrm{1}}}

\usepackage{tikz}

\makeatletter
\def\@makefnmark{%
  \leavevmode
  \raise.9ex\hbox{\fontsize\sf@size\z@\normalfont\tiny\@thefnmark}}
\makeatother

\begin{document}

\title[]{Fourier decay and endpoint estimates for lacunary maximal operators}

\author{Xiaojing Zhou}
\address{(Xiaojing Zhou) Department of Mathematics and Statistics\\
         Auburn University\\
         Auburn, Alabama, U.S.A, 36849}
\email{xiz0003@auburn.edu}

\begin{abstract}
We study endpoint estimates for lacunary maximal operators
associated with compactly supported measures. For every $d\geq1$,
we construct a compactly supported, absolutely continuous probability
measure on $\R^d$ with polynomial Fourier decay whose associated
lacunary maximal operator fails to map $H^1(\R^d)$ boundedly into
$L^{1,\infty}(\R^d)$. In particular, it fails to be of weak type $(1,1)$.

This gives a negative answer to the weak type $(1,1)$ question posed
by Seeger and Wright in 2011 and to Open Problems 14 and 15,
contributed by Wright at the 2016 Conference in Harmonic
Analysis, held at the University of
Wisconsin--Madison in honor of Professor Michael Christ.
\end{abstract}

\date{October 14, 2026}
\subjclass[2020]{Primary 42B25; Secondary 42B20}
\keywords{Lacunary maximal operator, Fourier decay, weak type estimate, Hardy space}
\maketitle

\section{Introduction}

Let $\mu$ be a compactly supported positive Borel measure on $\R^d$.
For each $k\in\Z$, define the dilated measure $\mu_k$ by
$$
 \int_{\R^d}f(y)\,d\mu_k(y)
 :=\int_{\R^d}f(2^k y)\,d\mu(y)
$$
for every bounded Borel function $f$ on $\R^d$. The corresponding
\emph{lacunary maximal operator} is defined as
$$
 M_\mu f(x)
 :=\sup_{k\in\Z}|f*\mu_k(x)|
 =\sup_{k\in\Z}\left|\int_{\R^d}f(x-2^k y)\,d\mu(y)\right|.
$$
The study of the lacunary maximal operator is a \emph{central} topic in harmonic
analysis. Classical contributions include Calder\'on's work on
lacunary spherical means \cite{Calderon1979} and Christ's endpoint
estimates \cite{Christ1988}. We refer to Stein's monograph
\cite{Stein1993} for background on maximal averages, and to Seeger
and Wright \cite{SeegerWright2011} for results and open problems
concerning more general measures.

The \emph{goal} of this paper is to study weak type $(1,1)$ estimates for
$M_\mu$. More precisely, we ask whether there exists a finite constant
$A_\mu$ such that
\begin{equation}\label{eq:weak}
 |\{x\in\R^d:M_\mu f(x)>\lambda\}|
 \leq\frac{A_\mu}{\lambda}\|f\|_{L^1(\R^d)}
\end{equation}
for every $f\in L^1(\R^d)$ and $\lambda>0$. Here and below, $|E|$
denotes the Lebesgue measure of a measurable set $E$, and we use the
Fourier transform
$$
 \widehat\mu(\xi):=\int_{\R^d}e^{-2\pi i x\cdot\xi}\,d\mu(x).
$$

We also consider the corresponding estimate for the real Hardy space
$H^1(\R^d)$:
\begin{equation}\label{eq:hardy-weak}
 |\{x\in\R^d:M_\mu f(x)>\lambda\}|
 \leq\frac{B_\mu}{\lambda}\|f\|_{H^1(\R^d)},
 \qquad f\in H^1(\R^d),\quad\lambda>0.
\end{equation}
Here we use the atomic norm on $H^1(\R^d)$. An $H^1$ atom is a
function $a$ supported in a cube $Q$ such that $\int a=0$ and
$\|a\|_{L^\infty(\R^d)}\leq |Q|^{-1}$, and $\|f\|_{H^1(\R^d)}$ is the infimum
of $\sum_j|c_j|$ over atomic representations $f=\sum_jc_ja_j$.
In particular, $\|a\|_{H^1(\R^d)}\leq1$ for every atom $a$, and
$\|f\|_{L^1(\R^d)}\leq\|f\|_{H^1(\R^d)}$. Thus failure of \eqref{eq:hardy-weak}
implies failure of \eqref{eq:weak}.

Our main result is the following.

\begin{thm}\label{thm:main}
For every integer $d\geq1$, there exists a compactly supported,
absolutely continuous probability measure $\mu$ on $\R^d$ such that
\begin{equation}\label{eq:main-decay}
 |\widehat\mu(\xi)|\leq C_d(1+|\xi|)^{-1/8},
 \qquad \xi\in\R^d,
\end{equation}
but $M_\mu$ does not map $H^1(\R^d)$ boundedly into
$L^{1,\infty}(\R^d)$. In particular, $M_\mu$ is not of weak type $(1,1)$.
\end{thm}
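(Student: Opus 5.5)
We will build $\mu$ as a weighted sum of many normalized pieces living at different scales, so that the maximal function of a single $H^1$ atom picks up a separate contribution from each piece. The counterexample comes from a lower bound for $|\{M_\mu a>\lambda\}|$ for one fixed atom $a$ that grows without bound relative to $\lambda^{-1}$.

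\emph{Construction of the measure.} Fix a smooth bump $\phi\ge0$ with $\int\phi=1$, supported in the unit ball. Put
$$
\mu=\sum_{j\ge1} c_j\,\nu_j,\qquad \sum_j c_j=1,
$$
where each $\nu_j$ is an absolutely continuous probability measure supported in a fixed ball. Each $\nu_j$ should be concentrated near a set that looks lower dimensional at scale $\delta_j$. Concretely, take $\nu_j=\delta_j^{-d}\phi(\delta_j^{-1}(\cdot-x_j))$, suitably averaged over a finite family of points $x_j$, with $\delta_j\to0$ rapidly. Then $\widehat{\nu_j}(\xi)=\widehat\phi(\delta_j\xi)\cdot(\text{exponential sum})$.

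\begin{itemize}
\item Choose the point configuration (for instance an arithmetic-type set randomized in the phases, or a piece of a curved surface) so that the exponential sum decays like $|\xi|^{-\alpha}$ for $|\xi|\lesssim\delta_j^{-1}$.
\item The factor $\widehat\phi(\delta_j\xi)$ takes care of $|\xi|\gg\delta_j^{-1}$.
\item Choose the weights $c_j$ with $c_j\approx j^{-1}(\log j)^{-2}$ or similar. Since the scales are very sparse, each $\xi$ essentially sees only the $O(1)$ pieces with $\delta_j^{-1}\approx|\xi|$.
\item Summing gives $|\widehat\mu(\xi)|\lesssim|\xi|^{-1/8}$. The exponent $1/8$ leaves room for losses in the decay of the exponential sum.
\end{itemize}

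\emph{Lower bound.} Take $a$ a mean-zero atom adapted to the unit cube, for example $a=\psi-\psi(\cdot-e_1)$ with $\psi$ a small bump. For each $j$, consider the dilation $2^{k}$ with $2^{k}\delta_j\approx1$, so that $\nu_j$ rescaled by $2^k$ consists of bumps of size $\approx1$ centered at $2^kx_j$.

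\begin{itemize}
\item On the set $E_j$ of points $x$ within distance $O(1)$ of $2^{k}x_j$ (after reflection), we have $|a*(c_j\nu_j)_k(x)|\gtrsim c_j\cdot2^{-kd}\delta_j^{-d}/\#\{x_j\}$.
\item The other pieces, and the cancellation of $a$ against the smooth parts, contribute less. This uses that $a$ has mean zero and that the coarser pieces are smooth at scale $2^k$.
\item So $M_\mu a\gtrsim c_j/N_j$ on $E_j$, where $N_j$ is the number of points. Also $|E_j|\approx N_j$ (the balls are disjoint at spacing $\gg1$).
\item Hence for $\lambda_j=c_j/N_j$ we get $\lambda_j|\{M_\mu a>\lambda_j\}|\gtrsim c_j$, which is only bounded. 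That is not enough.
\end{itemize}

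To get divergence I will instead use a superposition $f=\sum_{i=1}^{n}a_i$ of $n$ atoms at dyadic scales $2^{i}$, with $\|f\|_{H^1}\le n$. The aim is to arrange that, at a common level $\lambda$, their level sets $\{M_\mu a_i>\lambda\}$ come from different pieces $\nu_{j(i)}$ and stack up. In the construction the scales $\delta_j$ and translations $x_j$ are tuned so that these level sets are essentially disjoint, each of measure $\approx\lambda^{-1}\log n$ or larger. This is the mechanism behind the failure of $L\log\log L$-type substitutes, and it is in the spirit of the Seeger--Wright counterexamples. The result is $\lambda|\{M_\mu f>\lambda\}|\gtrsim n\log n\gg\|f\|_{H^1}$.

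\emph{Main obstacle.} The hardest step is reconciling polynomial Fourier decay with this stacking. Fourier decay forces the pieces $\nu_j$ to be spread out, which weakens the pointwise lower bounds. I would try first to take the point sets as lattice points on a paraboloid, for which Weyl/Gauss sum bounds give decay roughly $|\xi|^{-1/2}$ up to losses. The lower bound would then be checked by an explicit counting of overlaps, with $1/8$ absorbing the losses.
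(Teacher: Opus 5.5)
\textbf{The gap is in the lower bound,} that is, in the mechanism that should make the weak-type ratio diverge. Your one-atom computation correctly gives only $\lambda_j|\{M_\mu a>\lambda_j\}|\gtrsim c_j$. The proposed repair by superposing atoms $a_1,\dots,a_n$ cannot produce growth as described. If the level sets coming from different atoms are disjoint and each only satisfies $\lambda|\{M_\mu a_i>\lambda\}|\lesssim 1$, stacking them gives $\lesssim n$, which is exactly the size of $\|f\|_{H^1}\le n$. Your assertion that each has measure $\approx\lambda^{-1}\log n$ is precisely what needs proof. For a single atom it would already be the counterexample, so the superposition adds nothing. Moreover, lower bounds for the individual $M_\mu a_i$ do not transfer to $M_\mu(\sum_i a_i)$, since the atoms may cancel in $f*\mu_k$.

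The Fourier half is also left unspecified. If you pursue the parabola in $d=1$, note that $x_j=1+j^2/N^2$ gives $|\frac1N\sum_j e^{-2\pi i\xi x_j}|=1$ at $\xi=N^2$, so major-arc resonances would have to be absorbed by the bump width. The paper avoids number theory. It picks independent uniform points $x_j\in I_j$ and uses a Hoeffding-type bound plus a union bound over an $N^{-1}$-net of $[-N^3,N^3]$, getting error $\le N^{-1/4}$ there. The bump width $N^{-2}$ then handles $|\xi|\ge N^2$.

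\textbf{The missing idea} is that one atom against one piece already loses a logarithm once you use \emph{all} admissible dilations, not only $2^k\approx\delta_j^{-1}$. Take your own setting: $d=1$, $N_j$ points spaced $\approx N_j^{-1}$ in $[1,2]$, and a unit-scale atom. Your computation holds at the \emph{same} height $\approx c_j/N_j$ for every $k$ with $N_j\lesssim 2^k\lesssim\delta_j^{-1}$, since the dilated bumps then have width $\lesssim1$ and mass $1/N_j$, and their centers are $\gg1$ apart. The resulting sets lie near $[2^k,2^{k+1}]$ and are essentially disjoint in $k$. Hence $\lambda|\{M_\mu a>\lambda\}|\gtrsim c_j\log(1/(N_j\delta_j))$, which diverges once $\log(1/(N_j\delta_j))\gg c_j^{-1}$.

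The paper runs exactly this with half-width $N^{-2}$, the atom $a_N=\frac N4(\one_{[-\frac1{10N},\frac1{10N}]}-\one_{[\frac9{10N},\frac{11}{10N}]})$, and $N^{1/4}\le 2^k\le N^{1/2}$. It restricts to the $\simeq N$ indices with $x_j-x_{j-1}\ge\frac1{2N}$ and uses a sign argument for later points. This gives $a_N*(\mu_N)_k\ge\frac14$ on $E_k\subset(2^k,2^{k+1})$ with $|E_k|\simeq1$, so $\|M_{\mu_N}a_N\|_{L^{1,\infty}}\gtrsim\log N$.

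The weights must then beat this logarithm. The paper takes $(q-1)q^{-m}$ with $N_m=2^{q^{2m}}$, so $q^{1-m}\log_2N_m=q^{m+1}\to\infty$. The other pieces, at the scales used for $a_{N_m}$, must be controlled separately. The coarser ones are handled in $L^\infty$, with bound $\lesssim 2^{-k}N_{m-1}^2\le N_m^{-1/8}$. The finer ones are handled in $L^1$ plus Chebyshev, using their small total weight. For $d\ge2$ the paper tensors with normalized Lebesgue measure on $[-1,1]^{d-1}$ and tests on $a_{N_m}(x_1)\one_{[-N_m,N_m]^{d-1}}(x')$.
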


It is well known that if a compactly supported finite measure $\mu$
satisfies
\begin{equation}\label{eq:fourier-hyp}
 |\widehat\mu(\xi)|\leq C(1+|\xi|)^{-\varepsilon}
 \qquad\text{for some }\varepsilon>0,
\end{equation}
then $M_\mu$ is bounded on $L^p(\R^d)$ for every $1<p<\infty$;
see \cite{SeegerWright2011}. At the endpoint, several positive
results are known under additional assumptions on the underlying
measure. In the spherical case, Christ \cite{Christ1988} proved
$H^1(\R^d)\to L^{1,\infty}(\R^d)$ boundedness, while Seeger,
Tao, and Wright \cite{SeegerTaoWright2003} obtained a weak
$L\log\log L$ estimate, subsequently refined by Cladek and Krause
\cite{CladekKrause2024}. For more general measures, Seeger and
Wright \cite{SeegerWright2011} established
$H^1(\R^d)\to L^{1,\infty}(\R^d)$ boundedness under a dimensional
assumption on the support together with a corresponding
$L^p$ smoothing assumption.

The question addressed here is whether the Fourier decay condition
\eqref{eq:fourier-hyp} alone suffices at the endpoint. More precisely,
Seeger and Wright \cite{SeegerWright2011} asked whether
\eqref{eq:fourier-hyp} implies the weak type $(1,1)$ estimate
\eqref{eq:weak}. This question also appears as Problem~14,
contributed by Wright, in
\cite{GrafakosOliveiraPramanikSeegerStovall2017}. Problem~15 in
the same collection asks whether \eqref{eq:fourier-hyp} implies
the Hardy space estimate \eqref{eq:hardy-weak}.
To the best of our knowledge, Theorem~\ref{thm:main} provides
a negative answer to both questions as stated for general
compactly supported finite measures.

\begin{rem}
The exponent $1/8$ in \eqref{eq:main-decay} is obtained from
our construction. It would be interesting to determine the
optimal range of Fourier decay exponents for which the
endpoint estimate \eqref{eq:hardy-weak} can still fail.
We do not claim that $1/8$ is the best possible exponent
for such counterexamples.
\end{rem}

The \emph{key} ingredient of the proof is the construction of a
family of probability measures $\mu_N$ on $\R$ whose Fourier
decay estimates are uniform in $N$
(Lemma~\ref{lem:uniform-decay}), while the best constants in
the $H^1\to L^{1,\infty}$ estimates for $M_{\mu_N}$ tend to
infinity along $N=2^L$ (Proposition~\ref{prop:lower}).
We then form a weighted sum of these measures, choosing the
weights and scales so that the Fourier decay is retained and
cancellation does not destroy the lower bounds.

The rest of the paper is organized as follows.
In Section~\ref{sec:blocks}, we construct the measures $\mu_N$
using Lemma~\ref{lem:sampling} and establish their uniform
Fourier decay. In Section~\ref{sec:fixed}, we prove the lower
bounds using $H^1$ atoms and construct a single measure for
which \eqref{eq:hardy-weak} fails. A product construction
then gives the result in higher dimensions.
Finally, in Remark~\ref{rem:singular}, we adapt this product
construction to obtain singular measures with the same
Fourier decay and failure of \eqref{eq:hardy-weak}
in every dimension $d\geq3$.

Throughout this paper, for nonnegative quantities $a$ and $b$,
we write $a\lesssim b$ if $a\leq Cb$ for some constant $C>0$
independent of $a$ and $b$. We write $a\simeq b$ if both
$a\lesssim b$ and $b\lesssim a$ hold. The symbol $\one_E$
denotes the characteristic function of a set $E$. Constants denoted by $C$ may change from line to line, and $C_d$ denotes a constant depending only on $d$. Finally, we use $\mathbb E$ and $\mathbb P$ to denote expectation
and probability, respectively.

\section{Construction of the measures}\label{sec:blocks}

\subsection{Choice of the points}

The following lemma is important.

\begin{lem}\label{lem:sampling}
There is an integer $N_0\geq2$ such that, for every integer
$N\geq N_0$, there exist points
$$
 x_j\in I_j
 :=\left[1+\frac{j-1}{N},\,1+\frac jN\right],
 \qquad 1\leq j\leq N,
$$
satisfying
\begin{equation}\label{eq:sampling}
 \sup_{|\xi|\leq N^3}
 \left|\frac1N\sum_{j=1}^N e^{-2\pi i\xi x_j}
          -\int_1^2 e^{-2\pi i\xi x}\,dx\right|
 \leq N^{-1/4}.
\end{equation}
\end{lem}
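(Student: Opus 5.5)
We will use a probabilistic construction: choose the $x_j$ independently, with $x_j$ uniformly distributed on $I_j$, and show that with positive probability \eqref{eq:sampling} holds.

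\emph{Unbiasedness and concentration.} Put
$$S(\xi):=\frac1N\sum_{j=1}^N e^{-2\pi i\xi x_j}, \qquad F(\xi):=\int_1^2 e^{-2\pi i\xi x}\,dx .$$
Since $x_j$ has density $N\one_{I_j}$,
$$\mathbb E\, e^{-2\pi i\xi x_j}=N\int_{I_j}e^{-2\pi i\xi x}\,dx .$$
Summing over $j$ gives $\mathbb E\, S(\xi)=F(\xi)$ exactly, for every $\xi$. Thus $S(\xi)-F(\xi)=\frac1N\sum_j Y_j(\xi)$, where the $Y_j(\xi)$ are independent, have mean zero, and satisfy $|Y_j|\le 2$. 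Apply Hoeffding's inequality to the real and imaginary parts separately. For each fixed $\xi$,
$$\mathbb P\bigl(|S(\xi)-F(\xi)|>t\bigr)\le 4\exp(-cNt^2).$$
We take $t=\tfrac12 N^{-1/4}$. The bound is then $4\exp(-cN^{1/2}/4)$, which is superpolynomially small in $N$.

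\emph{Discretization.} Let $\Xi$ be a grid of step $\delta=N^{-2}$ in $[-N^3,N^3]$, so $\#\Xi\lesssim N^5$. A union bound gives
$$\mathbb P\Bigl(\max_{\xi\in\Xi}|S(\xi)-F(\xi)|>\tfrac12N^{-1/4}\Bigr)\lesssim N^5e^{-cN^{1/2}/4}<1$$
for $N\ge N_0$. Fix an outcome for which this maximum is at most $\tfrac12 N^{-1/4}$.

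\emph{Lipschitz control.} All $x_j$ and all $x\in[1,2]$ lie in $[1,2]$. Hence both $S$ and $F$ are Lipschitz in $\xi$ with constant at most $2\pi\cdot 2=4\pi$, deterministically. Any $|\xi|\le N^3$ lies within $\delta$ of some grid point, so
$$|S(\xi)-F(\xi)|\le \tfrac12N^{-1/4}+8\pi N^{-2}\le N^{-1/4}$$
once $N$ is large. This fixes $N_0$ and proves \eqref{eq:sampling}.

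The only potential obstacle is making the union bound beat the polynomial number of grid points. This works because the Hoeffding tail $e^{-cN^{1/2}}$ decays faster than any polynomial. The Lipschitz constant is uniform because the supports stay in $[1,2]$, so the grid step need only be polynomially small.
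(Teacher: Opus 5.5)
Your proposal is correct and follows the paper's proof. Both take independent uniform points $x_j\in I_j$, so the average is an unbiased estimator of $\int_1^2 e^{-2\pi i\xi x}\,dx$, and then use a sub-Gaussian tail bound on the real and imaginary parts, a union bound over a polynomial-size grid in $[-N^3,N^3]$, and a deterministic Lipschitz bound in $\xi$. The differences are cosmetic: the paper proves the Hoeffding-type bound $4e^{-Nt^2/4}$ by hand and uses the coarser grid $N^{-1}\Z$, whereas you cite Hoeffding's inequality directly and use grid step $N^{-2}$.
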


\begin{proof}
Let $X_1,\ldots,X_N$ be independent random variables, with
$X_j$ uniformly distributed on $I_j$. Then
$$
 \mathbb E\left(\frac1N\sum_{j=1}^N e^{-2\pi i\xi X_j}\right)
 =\sum_{j=1}^N\int_{I_j}e^{-2\pi i\xi x}\,dx
 =\int_1^2e^{-2\pi i\xi x}\,dx.
$$
Define
$$
 D_N(\xi)
 :=\frac1N\sum_{j=1}^N e^{-2\pi i\xi X_j}
       -\int_1^2e^{-2\pi i\xi x}\,dx.
$$
We first prove that, for every fixed $\xi\in\R$,
\begin{equation}\label{eq:concentration}
 \mathbb P\{|D_N(\xi)|>t\}\leq4e^{-Nt^2/4},
 \qquad t>0.
\end{equation}

Let $Y$ be a real random variable with values in $[-1,1]$,
and set $H(s):=\log\mathbb E e^{sY}$. A direct computation yields
$$
 H''(s)
 =\frac{\mathbb E(Y^2e^{sY})}{\mathbb E e^{sY}}
  -\left(\frac{\mathbb E(Ye^{sY})}{\mathbb E e^{sY}}\right)^2
 \leq1.
$$
Since $H(0)=0$ and $H'(0)=\mathbb E Y$, Taylor's formula gives
$H(s)\leq s\mathbb E Y+s^2/2$. Hence
$$
 \mathbb E e^{s(Y-\mathbb E Y)}\leq e^{s^2/2},
 \qquad s\in\R.
$$

Now let $Y_1,\ldots,Y_N$ be independent real random variables
with values in $[-1,1]$, and fix $s,u>0$.
By Markov's inequality, we have
$$
 \begin{aligned}
 &\mathbb P\left\{
   \frac1N\sum_{j=1}^N(Y_j-\mathbb E Y_j)>u
 \right\}\\
 &\quad=
 \mathbb P\left\{
   \exp\left(s\sum_{j=1}^N(Y_j-\mathbb E Y_j)\right)>e^{sNu}
 \right\}\\
 &\quad\leq e^{-sNu} \cdot 
   \mathbb E \left[ \exp\left(s\sum_{j=1}^N(Y_j-\mathbb E Y_j)\right) \right] \\
 &\quad= e^{-sNu}\prod_{j=1}^N
   \mathbb E e^{s(Y_j-\mathbb E Y_j)}\\
 &\quad\leq e^{-sNu+Ns^2/2}.
 \end{aligned}
$$
Letting $s=u$, we see the last expression equals $e^{-Nu^2/2}$.
The same argument applies to $-Y_j$, and therefore
\begin{equation} \label{eq:markov}
 \mathbb P\left\{
 \left|\frac1N\sum_{j=1}^N(Y_j-\mathbb E Y_j)\right|>u
 \right\}\leq2e^{-Nu^2/2}.
\end{equation} 

We now return to the proof of \eqref{eq:concentration}. For each fixed $\xi$, an easy computation yields
$$
 \re D_N(\xi)
 =\frac1N\sum_{j=1}^N
 \left(\cos(2\pi\xi X_j)-\mathbb E\cos(2\pi\xi X_j)\right).
$$
The same formula holds for $\im D_N(\xi)$ with
$\cos(2\pi\xi X_j)$ replaced by $-\sin(2\pi\xi X_j)$.
Each of these two families consists of independent random
variables with values in $[-1,1]$. Thus \eqref{eq:markov}
applies to both $\re D_N(\xi)$ and $\im D_N(\xi)$.
Moreover, if $|D_N(\xi)|>t$, then either
$|\re D_N(\xi)|>t/\sqrt2$ or
$|\im D_N(\xi)|>t/\sqrt2$. Consequently,
$$
 \begin{aligned}
 \mathbb P\{|D_N(\xi)|>t\}
 &\leq
 \mathbb P\{|\re D_N(\xi)|>t/\sqrt2\}
 +\mathbb P\{|\im D_N(\xi)|>t/\sqrt2\}\\
 &\leq4e^{-Nt^2/4}.
 \end{aligned}
$$
This proves \eqref{eq:concentration}.

\vspace{0.1cm}

We are now ready to prove \eqref{eq:sampling}. Since $1\leq X_j\leq2$,
we have
\begin{equation}\label{eq:derivative}
 |D_N'(\xi)|
 \leq\frac{2\pi}{N}\sum_{j=1}^N X_j
       +2\pi\int_1^2x\,dx
 \leq7\pi,
\end{equation}
where the bound is independent of $N$ and the
choice of $X_1,\ldots,X_N$.

Consider the finite set
$$
 \Gamma_N:=N^{-1}\Z\cap[-N^3,N^3].
$$
Then $\#\Gamma_N\lesssim N^4$, and every point of
$[-N^3,N^3]$ has distance at most $N^{-1}$ from $\Gamma_N$.
By \eqref{eq:concentration},
$$
 \begin{aligned}
 \mathbb P\left\{
   \max_{\xi\in\Gamma_N}|D_N(\xi)|>\frac12N^{-1/4}
 \right\}
 &\leq\sum_{\xi\in\Gamma_N}
   \mathbb P\left\{|D_N(\xi)|>\frac12N^{-1/4}\right\}\\
 &\lesssim \left(\#\Gamma_N \right) \cdot e^{-\sqrt N/16}\\
 &\lesssim N^4e^{-\sqrt N/16}.
 \end{aligned}
$$
The last expression tends to zero as $N\to\infty$.
We may therefore choose $N_0\geq2$ sufficiently large so that the probability
above is less than one for every $N\geq N_0$.
Hence, for each such $N$, there is a choice of $X_1,\ldots,X_N$
for which
$$
 |D_N(\xi)|\leq\frac12N^{-1/4}
 \qquad\text{for every }\xi\in\Gamma_N.
$$

Fix such a choice. For any $\xi\in[-N^3,N^3]$, choose
$\xi_0\in\Gamma_N$ with $|\xi-\xi_0|\leq N^{-1}$.
By \eqref{eq:derivative},
$$
 \begin{aligned}
 |D_N(\xi)|
 &\leq |D_N(\xi_0)|
       +\left|\int_{\xi_0}^{\xi}D_N'(s)\,ds\right|\\
 &\leq\frac12N^{-1/4}+7\pi|\xi-\xi_0|\\
 &\leq\frac12N^{-1/4}+7\pi N^{-1}\\
 &\leq N^{-1/4}.
 \end{aligned}
$$
Since $\xi\in[-N^3,N^3]$ was arbitrary, the points
$x_j:=X_j$, $1\leq j\leq N$, satisfy \eqref{eq:sampling}.
The proof is complete.
\end{proof}

\subsection{The Fourier decay estimate}

For each $N\geq N_0$, fix points $x_1,\ldots,x_N$ as in
Lemma~\ref{lem:sampling}. Define
\begin{equation}\label{eq:block}
 d\mu_N(x):=K_N(x)\,dx,\qquad
 K_N(x):=\frac N2\sum_{j=1}^N
   \one_{[x_j-N^{-2},\,x_j+N^{-2}]}(x).
\end{equation}
It is clear that $\mu_N$ is a positive and absolutely continuous measure with
$$
 \mu_N(\R)
 =\frac N2\sum_{j=1}^N 2N^{-2}=1.
$$
Thus $\mu_N$ is a probability measure.
Since $x_j\in[1,2]$ and $N^{-2}\leq1/4$, we also have
\begin{equation}\label{eq:common-support}
 \supp\;\mu_N\subseteq[1/2,5/2].
\end{equation}
We refer the reader to Figure~\ref{fig:mu-four} below for an illustration of this construction. 

\begin{figure}[htbp]
\centering
\begin{tikzpicture}[x=10cm,y=0.9cm,font=\small]

% The four intervals I_j.
\foreach \p in {1,1.25,1.5,1.75,2}
  \draw[black!30,densely dashed] (\p,-0.75) -- (\p,2.15);
\draw[black!45,densely dashed] (0.91,2) -- (2,2);

% One illustrative choice of x_j in each I_j.
\foreach \p/\j in {1.10/1,1.40/2,1.62/3,1.91/4}{
  \draw[fill=black!12,line width=0.7pt]
    ({\p-0.0625},0) rectangle ({\p+0.0625},2);
  \draw[black!60,densely dotted] (\p,0) -- (\p,2);
  \fill (\p,0) circle (1.3pt);
  \node[above] at (\p,2.05) {$x_{\j}$};
}

% Axes and labels.
\draw[->] (0.91,0) -- (2.09,0) node[right] {$x$};
\draw[->] (0.91,0) -- (0.91,2.65)
  node[above] {$\dfrac{d\mu_4}{dx}$};
\draw (0.902,2) -- (0.918,2);
\node[left] at (0.90,2) {$2$};

\foreach \p/\v in {1/1,1.25/{\frac54},1.5/{\frac32},
                    1.75/{\frac74},2/2}{
  \draw (\p,0.06) -- (\p,-0.06);
  \node[below,fill=white,inner sep=1.5pt]
    at (\p,-0.08) {$\v$};
}

% Width of the first rectangle.
\draw[<->] (1.0375,0.65) -- (1.1625,0.65)
  node[midway,above,fill=black!12,inner sep=1.5pt]
  {$\frac18$};

% Labels for the intervals below the graph.
\draw (1,-0.75) -- (2,-0.75);
\foreach \p in {1,1.25,1.5,1.75,2}
  \draw (\p,-0.68) -- (\p,-0.82);
\foreach \p/\j in {1.125/1,1.375/2,1.625/3,1.875/4}
  \node[below] at (\p,-0.78) {$I_{\j}$};

\end{tikzpicture}
\caption{An illustration of the construction for $N=4$.
Each point $x_j$ lies in $I_j$, and each shaded rectangle has
width $1/8$, height $2$, and area $1/4$.}
\label{fig:mu-four}
\end{figure}

We have the following Fourier decay estimate for $\mu_N$. 

\begin{lem}\label{lem:uniform-decay}
For every $N\geq N_0$, the measure $\mu_N$ satisfies
\begin{equation}\label{eq:block-decay}
 |\widehat{\mu_N}(\xi)|\lesssim(1+|\xi|)^{-1/8},
 \qquad \xi\in\R,
\end{equation}
where the implicit constant is independent of $N$ and $\xi$.
\end{lem}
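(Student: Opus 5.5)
We compute $\widehat{\mu_N}$ explicitly. Since $K_N$ is a sum of indicators of intervals of length $2N^{-2}$ centered at the $x_j$, we have
$$
 \widehat{\mu_N}(\xi)
 =\frac N2\sum_{j=1}^N e^{-2\pi i\xi x_j}\,\frac{\sin(2\pi\xi N^{-2})}{\pi\xi}
 =\phi_N(\xi)\cdot\frac1N\sum_{j=1}^N e^{-2\pi i\xi x_j},
 \qquad
 \phi_N(\xi):=\frac{\sin(2\pi\xi N^{-2})}{2\pi\xi N^{-2}}.
$$
Note that $|\phi_N|\leq1$, $|\phi_N(\xi)|\leq N^2/(2\pi|\xi|)$, and the exponential sum is trivially bounded by $1$. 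For $|\xi|\leq1$ the bound \eqref{eq:block-decay} is trivial since $|\widehat{\mu_N}|\leq1$. We split the remaining frequencies into three ranges: $1\leq|\xi|\leq N^{2}$, $N^2\leq|\xi|\leq N^3$, and $|\xi|\geq N^3$.

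\emph{Range $|\xi|\geq N^3$.} Here we ignore the exponential sum and use the decay of $\phi_N$:
$$
 |\widehat{\mu_N}(\xi)|\leq\frac{N^2}{2\pi|\xi|}\lesssim|\xi|^{-1/3},
$$
because $N^2\leq|\xi|^{2/3}$. This is better than $|\xi|^{-1/8}$.

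\emph{Range $1\leq|\xi|\leq N^3$.} Here we invoke Lemma~\ref{lem:sampling}. The exponential sum equals $\int_1^2e^{-2\pi i\xi x}\,dx+O(N^{-1/4})$, and
$$
 \Bigl|\int_1^2e^{-2\pi i\xi x}\,dx\Bigr|\leq\frac1{\pi|\xi|}.
$$
Hence $|\widehat{\mu_N}(\xi)|\lesssim|\xi|^{-1}+N^{-1/4}$. Since $|\xi|\leq N^3$, we have $N^{-1/4}\leq|\xi|^{-1/12}$. That gives only the exponent $1/12$ from this cruder bound, so we refine the split at $N^2$.

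For $1\leq|\xi|\leq N^2$ we get $N^{-1/4}\leq|\xi|^{-1/8}$, which gives \eqref{eq:block-decay}.

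For $N^2\leq|\xi|\leq N^3$ we combine both factors. The exponential sum is $\lesssim|\xi|^{-1}+N^{-1/4}\lesssim N^{-1/4}$, and $|\phi_N(\xi)|\leq N^2/|\xi|$. Therefore
$$
 |\widehat{\mu_N}(\xi)|\lesssim\min\bigl(N^{-1/4},\,N^2|\xi|^{-1}\bigr).
$$
If $|\xi|\leq N^{2+1/4}$, then $N^{-1/4}\leq|\xi|^{-1/9}$, which is not quite $|\xi|^{-1/8}$. We therefore balance the two bounds instead of using one alone. Write $|\xi|=N^{a}$ with $2\leq a\leq3$. The minimum is $N^{-\max(1/4,\,a-2)}$, and we need $\max(1/4,a-2)\geq a/8$.
\begin{itemize}
\item[(i)] If $a\leq 16/7$, then $1/4\geq a/8$ holds exactly when $a\leq2$, so this fails for $2<a\leq 16/7$.
\item[(ii)] If $a\geq16/7$, then $a-2\geq a/8$, which is fine.
\end{itemize}

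The main obstacle is therefore the intermediate window $N^2<|\xi|<N^{16/7}$. There the sampling error $N^{-1/4}$ is not beaten by $|\xi|^{-1/8}$, and $\phi_N$ has not yet decayed enough. For this window I would first recheck the sharper information available in the sampling bound. I would hope the constants work out, and otherwise relax the target slightly. In fact, using only the true bound $|\phi_N(\xi)|\leq\min(1,N^2/(2\pi|\xi|))$ and the threshold $|\xi|\leq N^{2}$ for $\phi_N\approx1$, the worst case is $a=2+1/4$, giving exponent $1/9$.

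To reach $1/8$, I would instead apply Lemma~\ref{lem:sampling} more carefully, noting that its proof gives any error $N^{-\beta}$ with $\beta<1/2$ at no extra cost. With $\beta$ close to $1/2$ the worst case becomes $\max(\beta,a-2)\geq a/8$, which holds for all $a\in[2,3]$ as soon as $\beta\geq2/7$. If the lemma must be used exactly as stated, the fallback is to note that the worst case is at $a=9/4$, and check whether the stated exponent was intended with this balance. In all ranges the implicit constants are absolute, so they are independent of $N$.
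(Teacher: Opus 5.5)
There is a genuine gap in the middle range $N^2\le|\xi|\le N^3$. You correctly factor $\widehat{\mu_N}(\xi)=\phi_N(\xi)\,S_N(\xi)$, where $S_N(\xi)=\frac1N\sum_j e^{-2\pi i\xi x_j}$. But you then bound this product by $\min\bigl(N^{-1/4},\,N^2|\xi|^{-1}\bigr)$, so you use only one factor's decay at a time. With the minimum, the best exponent available is $1/9$, attained at the worst case $|\xi|=N^{9/4}$. As written, your argument therefore does not establish \eqref{eq:block-decay}.

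The missing step is to multiply the two bounds, since both hold simultaneously:
\[
|\widehat{\mu_N}(\xi)|\le \frac{N^2}{|\xi|}\,|S_N(\xi)|\lesssim \frac{N^2}{|\xi|}\bigl(|\xi|^{-1}+N^{-1/4}\bigr)=\frac{N^2}{|\xi|^2}+\frac{N^{7/4}}{|\xi|}.
\]
When $|\xi|\ge N^2$ we have $N^2/|\xi|^2\le|\xi|^{-1}$ and $N^{7/4}\le|\xi|^{7/8}$, hence $N^{7/4}/|\xi|\le|\xi|^{-1/8}$. In your notation $|\xi|=N^a$, the decay exponent is $(a-2)+\tfrac14$, not $\max(\tfrac14,a-2)$. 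The condition $(a-2)+\tfrac14\ge a/8$ is equivalent to $a\ge 2$, so the window $N^2<|\xi|<N^{16/7}$ is not an obstacle. This is exactly how the paper closes the middle range. Your treatments of $1\le|\xi|\le N^2$ and $|\xi|\ge N^3$ agree with the paper's.

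Your proposed repairs do not prove the lemma as stated. Re-running the proof of Lemma~\ref{lem:sampling} with error $N^{-\beta}$, $2/7\le\beta<1/2$, does work for large $N$. However, it yields possibly different points and a different $N_0$. It therefore proves decay for a different family of measures, not for the $\mu_N$ built from points that satisfy only \eqref{eq:sampling}. Falling back to the exponent $1/9$ proves a strictly weaker statement. Neither workaround is needed once you use the product bound.
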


\begin{proof}
By \eqref{eq:block}, a direct computation yields
$$
 \begin{aligned}
 \widehat{\mu_N}(\xi)
 &=\frac N2\sum_{j=1}^N
   \int_{x_j-N^{-2}}^{x_j+N^{-2}}e^{-2\pi i\xi x}\,dx\\
 &=\left(\frac1N\sum_{j=1}^N e^{-2\pi i\xi x_j}\right)
   \frac{\sin(2\pi N^{-2}\xi)}{2\pi N^{-2}\xi}, \qquad \xi \neq 0,
 \end{aligned}
$$
and $\widehat{\mu_N}(0)=1$. Consequently,
\begin{equation}\label{eq:sinc}
 |\widehat{\mu_N}(\xi)|
 \leq\min\left\{1,\frac{N^2}{|\xi|}\right\}
      \left|\frac1N\sum_{j=1}^N e^{-2\pi i\xi x_j}\right|,
 \qquad \xi\neq0.
\end{equation}
Note that the absolute value of the average on the right hand side of \eqref{eq:sinc} is at most one, hence it suffices to show \eqref{eq:block-decay} for $|\xi| \ge 1$.

Since
$$
 \left|\int_1^2e^{-2\pi i\xi x}\,dx\right|
 \lesssim|\xi|^{-1},
 \qquad |\xi| \ge 1,
$$
By Lemma~\ref{lem:sampling}, we have 
\begin{equation}\label{eq:PN}
 \left|\frac1N\sum_{j=1}^N e^{-2\pi i\xi x_j}\right|
 \lesssim|\xi|^{-1}+N^{-1/4},
 \qquad 1\leq|\xi|\leq N^3.
\end{equation}
We now consider three different cases.

\vspace{0.1cm} 

\noindent $\bullet$ If $1\leq|\xi|\leq N^2$, then
$N^{-1/4}\leq|\xi|^{-1/8}$. By \eqref{eq:sinc}
and \eqref{eq:PN}, we have
$$
 |\widehat{\mu_N}(\xi)|
 \lesssim|\xi|^{-1}+N^{-1/4}
 \lesssim|\xi|^{-1/8}.
$$

\noindent $\bullet$ If $N^2\leq|\xi|\leq N^3$, then $N^{7/4}\leq|\xi|^{7/8}$.
Using again \eqref{eq:sinc}
and \eqref{eq:PN}, we have 
$$
 \begin{aligned}
 |\widehat{\mu_N}(\xi)|
 &\lesssim\frac{N^2}{|\xi|}
\left(|\xi|^{-1}+N^{-1/4}\right)=\frac{N^2}{|\xi|^2}+\frac{N^{7/4}}{|\xi|}\\
&\leq|\xi|^{-1}+|\xi|^{-1/8}\lesssim|\xi|^{-1/8}.
 \end{aligned}
$$

\noindent $\bullet$ Finally, if $|\xi|\geq N^3$, then by \eqref{eq:sinc}, we see that 
$$
 |\widehat{\mu_N}(\xi)|
 \leq\frac{N^2}{|\xi|}
 \leq|\xi|^{-1/3}
 \leq|\xi|^{-1/8}.
$$
Combining all three cases above, we deduce that
$$
 |\widehat{\mu_N}(\xi)|\lesssim|\xi|^{-1/8},
 \qquad |\xi|\geq1.
$$
The proof of \eqref{eq:block-decay} is complete. 
\end{proof}

\medskip 

\section{Proof of Theorem \ref{thm:main}}\label{sec:fixed}

\subsection{The lower bound for $M_{\mu_N}$}\label{sec:lower}

Recall that the points $x_1,\ldots,x_N$ satisfy
$$
 x_j\in I_j
 :=\left[1+\frac{j-1}{N},\,1+\frac jN\right],
 \qquad 1\leq j\leq N.
$$
We now prove that the best constant in the $H^1\to L^{1,\infty}$
estimate for $M_{\mu_N}$ tends to infinity along the sequence
$N=2^L$. The argument below only requires that $x_j\in I_j$.

\begin{prop}\label{prop:lower}
Let $N=2^L$, where $L \ge 1$ is a sufficiently large positive integer.
There exist an $H^1(\R)$ atom $a_N$ with $\|a_N\|_{L^1(\R)}\simeq1$
and pairwise disjoint measurable sets $E_k$, indexed by the integers
$L/4\leq k\leq L/2$, such that
\begin{equation}\label{eq:atom-lower}
 |E_k|\gtrsim1,\qquad
 a_N*(\mu_N)_k(x)\gtrsim1,\qquad x\in E_k.
\end{equation}
In particular,
\begin{equation}\label{eq:finite-ratio}
 \frac{\|M_{\mu_N}a_N\|_{L^{1,\infty}(\R)}}
      {\|a_N\|_{H^1(\R)}}
 \gtrsim L.
\end{equation}
All implicit constants are independent of $N$, $k$, and the
choice of the points $x_j\in I_j$.
\end{prop}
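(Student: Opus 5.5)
The plan is to take a single $H^1(\R)$ atom at scale $\delta\simeq N^{-1}$, so that it is much wider than the smoothing width of the dilated bumps but much narrower than their spacing for every $k\ge L/4$. Each $(\mu_N)_k$ then behaves like $N$ point masses of size $1/N$, and the positive halves of the translated atoms produce a set of measure $\simeq1$ on which $a_N*(\mu_N)_k\gtrsim1$.

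\textbf{Setup.} Fix $\delta:=N^{-1}$, $Q:=[0,2\delta]$, and
$$
 a_N:=\frac1{2\delta}\bigl(\one_{[0,\delta]}-\one_{[\delta,2\delta]}\bigr).
$$
This is an $H^1$ atom supported in $Q$ with $\|a_N\|_{L^1(\R)}=1$. By \eqref{eq:block}, the dilate $(\mu_N)_k$ has density $2^{-k}K_N(2^{-k}\cdot)$. This is a sum of $N$ bumps centered at $c_j:=2^kx_j$, each of mass $1/N$ and width $w_k:=2^{k+1}N^{-2}$. Hence
$$
 a_N*(\mu_N)_k(x)=\frac1N\sum_{j=1}^N \bigl(a_N*\phi_{k}\bigr)(x-c_j),
$$
where $\phi_k$ is the normalized indicator of $[-w_k/2,w_k/2]$. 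For $k\le L/2$ we have $w_k\le 2N^{-3/2}\ll\delta$. Therefore $a_N*\phi_k$ equals $1/(2\delta)$ on $[w_k,\delta-w_k]$, vanishes outside $[-w_k,2\delta+w_k]$, and is nonnegative on $(-\infty,\delta-w_k]$.

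\textbf{Separation and clusters.} Next I use the lower bound $k\ge L/4$. Since $x_j\in I_j$, we have $x_{j+2}-x_j\ge 1/N$, so
$$
 c_{j+2}-c_j\ge 2^k/N\ge N^{-3/4}\gg 4\delta .
$$
Group the centers into maximal clusters, declaring consecutive centers to be in the same cluster when their gap is at most $4\delta$. By the separation above, each cluster has at most two points, so there are at least $N/2$ clusters. Moreover, distinct clusters are more than $4\delta$ apart.

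For the leftmost center $c$ of a cluster, set $J_c:=c+[w_k,\delta-w_k]$. For $x\in J_c$:
\begin{itemize}
\item the term with center $c$ contributes $\frac1N\cdot\frac1{2\delta}=\frac12$;
\item a possible second center $c'\ge c$ in the same cluster satisfies $x-c'\le\delta-w_k$, so its term is nonnegative;
\item centers from other clusters lie more than $4\delta$ away, so their terms vanish.
\end{itemize}
Thus $a_N*(\mu_N)_k\ge\frac12$ on $\bigcup_c J_c$. These intervals are disjoint and have total length at least $\frac N2(\delta-2w_k)\gtrsim1$.

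\textbf{Disjointness and conclusion.} The union $\bigcup_cJ_c$ lies in $[2^k,2^{k+1}+2\delta]$. To make the sets disjoint in $k$, let $E_k$ be this union intersected with $[2^k,2^{k+1})$. The excluded piece has length at most $2\delta$, so $|E_k|\gtrsim1$ survives, and this gives \eqref{eq:atom-lower}. Since $M_{\mu_N}a_N\ge\frac12$ on $\bigcup_{L/4\le k\le L/2}E_k$, a set of measure $\gtrsim L$, we get $\|M_{\mu_N}a_N\|_{L^{1,\infty}}\gtrsim L$. Combined with $\|a_N\|_{H^1}\le1$, this yields \eqref{eq:finite-ratio}.

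The step I expect to be the main obstacle is the interaction between neighbouring translates. The points $x_j\in I_j$ are arbitrary and consecutive ones may nearly coincide, so a negative half could cancel a positive one. The leftmost-in-cluster argument handles this, and it depends on the sign structure of the atom (positive part first) together with the restriction $k\ge L/4$, which keeps clusters of size at most two. All constants depend only on the facts $x_j\in I_j$ and $L/4\le k\le L/2$, as required.
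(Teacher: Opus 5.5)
Your proof is correct and follows essentially the same route as the paper. Both use a single atom at scale $1/N$ whose positive part lies to the left of its negative part. Near a ``leftmost'' dilated center $2^kx_j$, contributions from centers to the left vanish because of the separation $2^k/N\gg 1/N$ (from $k\ge L/4$), contributions from the right are nonnegative, and the bump width $2^kN^{-2}$ is negligible (from $k\le L/2$). The only differences are cosmetic: the paper selects indices with $x_j-x_{j-1}\ge 1/(2N)$ in the range $N/4<j\le 3N/4$, which puts $E_k\subseteq(2^k,2^{k+1})$ automatically, whereas you group the dilated centers into clusters of at most two and truncate to $[2^k,2^{k+1})$ at the end.
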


\begin{proof}
We divide the proof into four steps.

\vspace{0.1cm}

\noindent\textbf{Step 1: Construction of $a_N$.}
Define
\begin{equation} \label{eq:defnaN}
 a_N:=\frac N4\left(
       \one_{\left[-\frac1{10N},\,\frac1{10N}\right]}
       -\one_{\left[\frac9{10N},\,\frac{11}{10N}\right]}
       \right).
\end{equation}
Then $a_N$ is supported in $[-2/N,2/N]$, has integral zero,
and satisfies $\|a_N\|_{L^\infty(\R)}=N/4$.
Thus $a_N$ is an $H^1$ atom, and $\|a_N\|_{H^1(\R)}\leq1$.
A direct computation also gives $\|a_N\|_{L^1(\R)}\simeq1$.

\vspace{0.1cm}

\noindent\textbf{Step 2: Construction of $E_k$.}
Let
$$
 \mathcal J_N
 :=\left\{j\in\left\{\frac N4+1,\ldots,\frac{3N}4\right\}:
                  x_j-x_{j-1}\geq\frac1{2N}\right\}.
$$
We first show that $\#\mathcal J_N\simeq N$.
To see this, divide the indices in
$\{N/4+1,\ldots,3N/4\}$ into consecutive pairs $(j,j+1)$.
Since $x_{j+1}\in I_{j+1}$ and $x_{j-1}\in I_{j-1}$,
we have $x_{j+1}-x_{j-1}\geq 1/N$.
Hence at least one of the two gaps $x_j-x_{j-1}$ and
$x_{j+1}-x_j$ is at least $1/(2N)$.
Thus each pair $(j, j+1)$ contains at least an index in
$\mathcal J_N$, which proves the claim.

\vspace{0.1cm}

Note that $x_1\leq\cdots\leq x_N$. Hence, for
$j<j'$ with $j' \in \mathcal J_N$, we have
$$
 x_{j'}-x_j
 \geq x_{j'}-x_{j'-1}
 \geq\frac1{2N}.
$$
For each integer $L/4\leq k\leq L/2$, define
\begin{equation}\label{eq:Ek}
 E_k:=\bigcup_{j\in\mathcal J_N}
 \left(2^k x_j-\frac1{20N},\,
       2^k x_j+\frac1{20N}\right).
\end{equation}
The intervals in this union are pairwise disjoint, since their
centers are separated by at least $2^k/(2N)$, whereas each interval
has length $1/(10N)$. Consequently,
\begin{equation}\label{eq:Ek-size}
 |E_k|=\frac{\#\mathcal J_N}{10N}\simeq1.
\end{equation}
Moreover, for $j\in\mathcal J_N$, we have
$N/4+1\leq j\leq (3N)/4$ and $x_j\in I_j$, so
$x_j\in[5/4, 7/4]$, and consequently,
$$
 E_k
 \subseteq(2^k,2^{k+1}).
$$
Therefore the sets $E_k$ are also pairwise disjoint as $k$ varies.

\vspace{0.1cm}

\noindent\textbf{Step 3: Proof of \eqref{eq:atom-lower}.}
We now prove the pointwise lower bound in \eqref{eq:atom-lower}.
By \eqref{eq:block}, we have
$$
 a_N*(\mu_N)_k(x)
 =\frac N2\sum_{\ell=1}^N
   \int_{-N^{-2}}^{N^{-2}}
   a_N\bigl(x-2^k(x_\ell+u)\bigr)\,du.
$$
Fix $x\in E_k$ and choose $j(x) \in\mathcal J_N$ such that
$$
 |x-2^k x_{j(x)}|<\frac1{20N}.
$$
We split the sum above into three parts:
\begin{equation}\label{eq:convolution-split}
 \begin{aligned}
 a_N*(\mu_N)_k(x)
 &=\frac N2\int_{-N^{-2}}^{N^{-2}}
     a_N\bigl(x-2^k(x_{j(x)}+u)\bigr)\,du\\
 &\quad+\frac N2\sum_{\ell=1}^{j(x)-1}
     \int_{-N^{-2}}^{N^{-2}}
     a_N\bigl(x-2^k(x_\ell+u)\bigr)\,du\\
 &\quad+\frac N2\sum_{\ell=j(x)+1}^{N}
     \int_{-N^{-2}}^{N^{-2}}
     a_N\bigl(x-2^k(x_\ell+u)\bigr)\,du\\
 &=:A_1+A_2+A_3.
 \end{aligned}
\end{equation}

\vspace{0.1cm}

\noindent\textit{\underline{Estimate of $A_1$}}.
Since $k\leq\frac L2$ and $L$ is sufficiently large,
$$
 2^kN^{-2}\leq N^{-3/2}<\frac1{20N}.
$$
Hence, for every $|u|\leq N^{-2}$,
$$
 |x-2^k(x_{j(x)}+u)|
 \leq |x-2^k x_{j(x)}|+2^k|u|
 <\frac1{10N}.
$$
By \eqref{eq:defnaN}, this implies
$$
 a_N\bigl(x-2^k(x_{j(x)}+u)\bigr)=\frac N4,
 \qquad |u|\leq N^{-2}.
$$
Consequently,
$$
 A_1=\frac N2\cdot\frac2{N^2}\cdot\frac N4\simeq1.
$$

\vspace{0.1cm}

\noindent\textit{\underline{Estimate of $A_2$}}.
If $\ell<j(x)$, then $j(x)\in\mathcal J_N$ gives
$$
 x_{j(x)}-x_\ell
 \geq x_{j(x)}-x_{j(x)-1}
 \geq\frac1{2N}.
$$
Thus, for every $|u|\leq N^{-2}$,
$$
 \begin{aligned}
 x-2^k(x_\ell+u)
 &=x-2^k x_{j(x)}+2^k(x_{j(x)}-x_\ell)-2^k u\\
 &>\frac{2^k}{2N}-\frac1{10N}\\
 &>\frac2N,
 \end{aligned}
$$
where the last inequality follows from $k\geq L/4$.
Since $a_N$ is supported in
$\left[-2/N, 2/N\right]$, we have $A_2=0$.

\vspace{0.1cm}

\noindent\textit{\underline{Estimate of $A_3$}}. 
If $\ell>j(x)$, then $x_\ell\geq x_{j(x)}$, and 
$$
 x-2^k(x_\ell+u)
 \leq x-2^k x_{j(x)}+2^k|u|
 <\frac1{10N},
 \qquad |u|\leq N^{-2}.
$$
In particular, the above computation gives
$x-2^k(x_\ell+u) \notin
\left[\frac9{10N},\frac{11}{10N}\right]$,
where $a_N$ is negative, and hence $A_3\geq0$.

\vspace{0.1cm}

Substituting these estimates into \eqref{eq:convolution-split},
we obtain for $x \in E_k$,
\begin{equation}\label{eq:pointwise-lower}
 a_N*(\mu_N)_k(x)
 =A_1+A_2+A_3
 \geq A_1
 \gtrsim1.
\end{equation}
Together with \eqref{eq:Ek-size}, this proves
\eqref{eq:atom-lower}.

\vspace{0.1cm}

\noindent\textbf{Step 4: Proof of \eqref{eq:finite-ratio}.}
Finally, choose an absolute constant $\lambda>0$ smaller than
the uniform lower bound in \eqref{eq:pointwise-lower}. Then
$$
 \bigcup_{\substack{k\in\Z\\L/4\leq k\leq L/2}}E_k
 \subseteq\{M_{\mu_N}a_N>\lambda\}.
$$
Since the sets $E_k$ are pairwise disjoint and there are
$\simeq L$ integers in this range, we have
$$
 |\{M_{\mu_N}a_N>\lambda\}|
 \geq\sum_{\substack{k\in\Z\\L/4\leq k\leq L/2}}|E_k|
 \gtrsim L.
$$
Using $\|a_N\|_{H^1(\R)}\leq1$ and the fact that $\lambda$ is an
absolute constant, we obtain \eqref{eq:finite-ratio}.
The proof is complete.
\end{proof}

\subsection{Proof of Theorem~\ref{thm:main} for $d=1$}

Recall that $N_0$ is the integer introduced in
Lemma~\ref{lem:sampling}. We first construct the desired measure $\mu$. Fix a sufficiently large integer $q$
such that $2^{q^2}\geq N_0$, and define
\begin{equation}\label{eq:parameters}
 N_m:=2^{q^{2m}}\quad(m\geq1),\qquad
 K(x):=(q-1)\sum_{m=1}^\infty q^{-m}K_{N_m}(x),
\end{equation}
where we recall that $K_N(x)$ is defined as in \eqref{eq:block}.

Since each $K_{N_m}$ is nonnegative and has integral one, we have 
$$
 \int_{\mathbb R}K(x)\,dx
 =(q-1)\sum_{m=1}^\infty q^{-m}=1.
$$
Thus $d\mu(x):=K(x)\,dx$ defines an absolutely continuous
probability measure. By \eqref{eq:common-support}, its support
is contained in $[1/2,5/2]$. Moreover,
\begin{equation}\label{eq:mixture}
 \mu=(q-1)\sum_{m=1}^\infty q^{-m}\mu_{N_m},
\end{equation}
where $\mu_N$ is defined in \eqref{eq:block}. 
Therefore, by Lemma~\ref{lem:uniform-decay}, we have 
\begin{equation}\label{eq:fixed-decay}
 |\widehat\mu(\xi)|
 \leq(q-1)\sum_{m=1}^\infty
      q^{-m}|\widehat{\mu_{N_m}}(\xi)|
 \lesssim(1+|\xi|)^{-1/8},
\end{equation}
which verifies the Fourier decay condition \eqref{eq:main-decay}. 

\vspace{0.1cm}

We now prove that $M_\mu$ is not bounded from $H^1(\mathbb R)$
to $L^{1,\infty}(\mathbb R)$. Fix $m\geq1$ and apply Proposition~\ref{prop:lower}
with $N=N_m$, where we recall $N_m$ is defined as in \eqref{eq:parameters}. Let $a_{N_m}$ and $E_k$ be the resulting atom
and pairwise disjoint sets, where
\begin{equation} \label{eq:rangek}
 \frac14\log_2N_m\leq k\leq\frac12\log_2N_m.
\end{equation} 
We have $\|a_{N_m}\|_{H^1(\mathbb R)}\leq1$ and $|E_k|\gtrsim1$.
Moreover, there exists an absolute constant $c>0$, independent of
$q$, $m$, and $k$, such that
\begin{equation} \label{eq:lowerbound3.1}
 (q-1)q^{-m}a_{N_m}*(\mu_{N_m})_k(x)
 \geq 4c q^{1-m},
 \qquad x\in E_k.
\end{equation} 

By the definition of the weak $L^1$ quasi-norm,
\begin{equation}\label{eq:fixed-weak-start}
 \|M_\mu a_{N_m}\|_{L^{1,\infty}(\mathbb R)}
 \geq c q^{1-m}
 \left|\left\{x\in\mathbb R:
 M_\mu a_{N_m}(x)>c q^{1-m}\right\}\right|.
\end{equation}
We shall show that the level set on the right has measure
$\gtrsim\log_2N_m$.

For each integer $k$ in the range \eqref{eq:rangek}, \eqref{eq:mixture} gives
\begin{equation}\label{eq:fixed-split}
 \begin{aligned}
 a_{N_m}*\mu_k(x)
 &=(q-1)q^{-m}a_{N_m}*(\mu_{N_m})_k(x)\\
 &\quad+(q-1)\sum_{\ell<m}q^{-\ell}
       a_{N_m}*(\mu_{N_\ell})_k(x)\\
 &\quad+(q-1)\sum_{\ell>m}q^{-\ell}
       a_{N_m}*(\mu_{N_\ell})_k(x)\\
 &=:B_1(x)+B_2(x)+B_3(x).
 \end{aligned}
\end{equation}
We estimate these three terms separately.

\vspace{0.1cm}

\noindent\textit{\underline{Estimate of $B_1$}}.
By \eqref{eq:lowerbound3.1}, we have
$$
 B_1(x)\geq4c q^{1-m},
 \qquad x\in E_k.
$$

\vspace{0.1cm}

\noindent\textit{\underline{Estimate of $B_2$}}.
We may assume $m \ge 2$, otherwise $B_2=0$.
The Radon–Nikodym derivative with respect to Lebesgue measure of $(\mu_{N_\ell})_k$ is
$2^{-k}K_{N_\ell}(2^{-k}x)$, and \eqref{eq:block} gives
$\|K_{N_\ell}\|_{L^\infty(\mathbb R)}\lesssim N_\ell^2$.
Thus, by Young's inequality, we have 
\begin{equation}\label{eq:B2}
 \begin{aligned}
 \|B_2\|_{L^\infty(\mathbb R)}
 &\leq(q-1)\sum_{\ell<m}q^{-\ell}
       \|a_{N_m}\|_{L^1(\mathbb R)}
    \cdot 2^{-k}\|K_{N_\ell}\|_{L^\infty(\mathbb R)}\\
 &\lesssim 2^{-k}N_{m-1}^2 \leq N_m^{-1/4+2/q^2}\\
 &\leq N_m^{-1/8}
 \leq q^{-m}.
 \end{aligned}
\end{equation}
Here we used the facts that $\|a_{N_m}\|_{L^1(\mathbb R)}\lesssim1$,
$k\geq\frac14\log_2N_m$, and $N_{m-1}=N_m^{1/q^2}$.

\vspace{0.1cm}

\noindent\textit{\underline{Estimate of $B_3$}}.
Since each $(\mu_{N_\ell})_k$ is a probability measure, one has
$$
 \begin{aligned}
 \|B_3\|_{L^1(\mathbb R)}
 &\leq(q-1)\sum_{\ell>m}q^{-\ell}
       \|a_{N_m}*(\mu_{N_\ell})_k\|_{L^1(\mathbb R)}\\
 &\leq(q-1)\|a_{N_m}\|_{L^1(\mathbb R)}
       \sum_{\ell>m}q^{-\ell}\\
 &\lesssim q^{-m}.
 \end{aligned}
$$
Consequently, by Chebyshev's inequality, we have 
\begin{equation}\label{eq:B3}
 \left|\left\{x\in\mathbb R:
 |B_3(x)|>c q^{1-m}\right\}\right|
 \leq\frac{\|B_3\|_{L^1(\mathbb R)}}{c q^{1-m}}
 \lesssim q^{-1}.
\end{equation} 

\vspace{0.1cm}

Returning to the decomposition \eqref{eq:fixed-split}, define
$$
 E_{m,k}:=
 \left\{x\in E_k:|B_3(x)|\leq c q^{1-m}\right\}.
$$
Since $q$ is sufficiently large and $|E_k|\gtrsim1$,
\eqref{eq:B2} and \eqref{eq:B3} give
$$
 \|B_2\|_{L^\infty(\mathbb R)}\leq c q^{1-m},
 \qquad \textrm{and} \qquad 
 |E_{m,k}|\geq\frac12|E_k|\gtrsim1,
$$
uniformly in $m$ and $k$.

\vspace{0.1cm}

Therefore, when $x \in E_{m, k}$, we have 
\begin{equation}\label{eq:fixed-pointwise}
 \begin{aligned}
 a_{N_m}*\mu_k(x)
 &\geq B_1(x)-|B_2(x)|-|B_3(x)|\\
 &\geq4c q^{1-m}-c q^{1-m}-c q^{1-m}\\
 &=2c q^{1-m}.
 \end{aligned}
\end{equation}
Observe that
$$
 \bigcup_{\substack{k\in\mathbb Z\\
       \frac14\log_2N_m\leq k\leq\frac12\log_2N_m}}
 E_{m,k}
 \subseteq
 \left\{x\in\mathbb R:
 M_\mu a_{N_m}(x)>c q^{1-m}\right\}.
$$
Indeed, if $x\in E_{m,k}$ for some $k \in \left[ \frac14\log_2N_m, \frac12\log_2N_m \right] \cap \Z$, then by \eqref{eq:fixed-pointwise}, we have 
$$
 M_\mu a_{N_m}(x)
 =\sup_{j\in\mathbb Z}|a_{N_m}*\mu_j(x)|
 \geq |a_{N_m}*\mu_k(x)|
 \geq 2c q^{1-m}
 >c q^{1-m}, 
$$
which gives the desired observation.  Moreover, the sets $E_{m,k}$ are pairwise disjoint, since $E_{m,k}\subseteq E_k$, and each of them has measure $\gtrsim1$. Therefore,
$$
\left|\left\{x\in\mathbb R:
 M_\mu a_{N_m}(x)>c q^{1-m}\right\}\right| \ge 
 \sum_{\substack{k\in\mathbb Z\\
       \frac14\log_2N_m\leq k\leq\frac12\log_2N_m}}
 |E_{m,k}|
 \gtrsim\log_2N_m.
$$
Substituting this estimate into \eqref{eq:fixed-weak-start}, we obtain
\begin{equation}\label{eq:fixed-ratio}
 \|M_\mu a_{N_m}\|_{L^{1,\infty}(\mathbb R)}
 \gtrsim q^{1-m}\log_2N_m
 =q^{m+1}\longrightarrow\infty.
\end{equation}
Since $\|a_{N_m}\|_{H^1(\mathbb R)}\leq1$, this proves that
$M_\mu$ is not bounded from $H^1(\mathbb R)$ to
$L^{1,\infty}(\mathbb R)$. In particular, it is not of weak
type $(1,1)$, which proves Theorem~\ref{thm:main} for $d=1$.

\subsection{Proof of Theorem~\ref{thm:main} for $d\geq2$}

Next, we treat the higher dimensional case. Let $\mu$ be the measure constructed in \eqref{eq:mixture}, and recall that $d\mu(x_1)=K(x_1)\,dx_1$, where $K$
is defined in \eqref{eq:parameters}. Let $\eta$ be normalized
Lebesgue measure on $[-1,1]^{d-1}$, and define
$$
 \nu:=\mu\otimes\eta.
$$
Then $\nu$ is an absolutely continuous probability measure. More
precisely, writing $x=(x_1,x')\in\R\times\R^{d-1}$, we have
$$
 d\nu(x)
 =2^{-(d-1)}K(x_1)\one_{[-1,1]^{d-1}}(x')\,dx.
$$
Its support is contained in $[1/2,5/2]\times[-1,1]^{d-1}$.

We first verify the Fourier decay condition \eqref{eq:main-decay}. A direct computation yields
$$
 \widehat\nu(\xi)
 =\widehat\mu(\xi_1)
    \prod_{j=2}^d\frac{\sin(2\pi\xi_j)}{2\pi\xi_j}.
$$
Here, each quotient $\frac{\sin(2\pi\xi_j)}{2\pi\xi_j}$ is interpreted as $1$ when $\xi_j=0$.  Therefore,
by \eqref{eq:fixed-decay}, we have 
$$
 \begin{aligned}
 |\widehat\nu(\xi)|
 &\lesssim (1+|\xi_1|)^{-1/8}
       \prod_{j=2}^d(1+|\xi_j|)^{-1}\\
 &\leq \prod_{j=1}^d(1+|\xi_j|)^{-1/8}\\
 &\leq (1+|\xi|)^{-1/8}.
 \end{aligned}
$$
This verifies \eqref{eq:main-decay}.

\vspace{0.1cm}

Finally, we show that $M_\nu$ is not bounded from $H^1(\R^d)$
to $L^{1,\infty}(\R^d)$. For each $m\geq1$, define
$$
 F_m(x_1,x')
 :=a_{N_m}(x_1)\one_{[-N_m,N_m]^{d-1}}(x'),
$$
where $N_m$ and $a_{N_m}$ are defined as in \eqref{eq:parameters} and \eqref{eq:defnaN}, respectively. By the definition of the weak $L^1$ quasi-norm,
\begin{equation}\label{eq:product-weak-start}
 \|M_\nu F_m\|_{L^{1,\infty}(\R^d)}
 \geq c q^{1-m}
 \left|\left\{x\in\R^d:
 M_\nu F_m(x)>c q^{1-m}\right\}\right|,
\end{equation}
with the same constant $c$ as in \eqref{eq:fixed-pointwise}.

\vspace{0.1cm}

We first estimate $\|F_m\|_{H^1(\R^d)}$. Recall from
\eqref{eq:defnaN} that $a_{N_m}$ is supported in
$[-2/N_m,2/N_m]$, has integral zero, and satisfies
$\|a_{N_m}\|_{L^\infty(\R)}\leq N_m/4$.
Partition $[-N_m,N_m]^{d-1}$ into cubes $Q$ of side length
$4/N_m$, with disjoint interiors. Observe that for each such cube $Q$, the function
$$
a_{\widetilde{Q}}(x):=\frac{a_{N_m}(x_1)\one_Q(x')}{|Q|}
$$
has integral zero, is supported in the cube $\widetilde{Q}:=[-2/N_m,2/N_m]\times Q$, and has $L^\infty$ norm at most
$$
 \frac{N_m}{4|Q|}
 =\left(\frac4{N_m}\right)^{-d}=\left|\widetilde{Q}\right|^{-1}. 
$$
Therefore, $a_{\widetilde{Q}}$ is an $H^1(\R^d)$ atom. We therefore have the atomic decomposition
\begin{equation} \label{eq:testhigher}
 F_m(x)=\sum_Q |Q|\,a_{\widetilde{Q}}(x).
\end{equation}
Since $\|a_{\widetilde{Q}}\|_{H^1(\R^d)}\leq1$ for each $Q$,
we obtain
\begin{equation}\label{eq:product-hardy}
 \begin{aligned}
 \|F_m\|_{H^1(\R^d)}
 \leq\sum_Q |Q|\,\|a_{\widetilde{Q}}\|_{H^1(\R^d)}\leq\sum_Q |Q|
 \lesssim N_m^{d-1}.
 \end{aligned}
\end{equation}

\vspace{0.1cm}

We next estimate the measure of the level set in
\eqref{eq:product-weak-start}. Let $k$ be an integer in the
range \eqref{eq:rangek}. If
$x'\in[-N_m/2,N_m/2]^{d-1}$ and $y'\in[-1,1]^{d-1}$, then
$$
 |x'_j-2^k y'_j|
 \leq \frac{N_m}{2}+2^k
 \leq \frac{N_m}{2}+N_m^{1/2}
 \leq N_m
$$
for every coordinate $j$. Thus, 
$$
 \begin{aligned}
 F_m*\nu_k(x_1,x')
 &=\bigl(a_{N_m}*\mu_k(x_1)\bigr)
   \int_{\R^{d-1}}
   \one_{[-N_m,N_m]^{d-1}}(x'-2^k y')\,d\eta(y')\\
 &=a_{N_m}*\mu_k(x_1).
 \end{aligned}
$$
Consequently, if $x_1\in E_{m,k}$ and
$x'\in[-N_m/2,N_m/2]^{d-1}$, then
\eqref{eq:fixed-pointwise} gives
$$
 M_\nu F_m(x_1,x')
 \geq |F_m*\nu_k(x_1,x')|
 \geq 2c q^{1-m}
 >c q^{1-m}.
$$
Thus, it follows that
$$
 \left(
 \bigcup_{\substack{k\in\Z\\
       \frac14\log_2N_m\leq k\leq\frac12\log_2N_m}}
 E_{m,k}
 \right)
 \times[-N_m/2,N_m/2]^{d-1}
 \subseteq
 \left\{x\in\R^d:M_\nu F_m(x)>c q^{1-m}\right\}.
$$
Again, since the sets $E_{m,k}$ are pairwise disjoint and each has
measure $\gtrsim1$, we obtain
$$
 \begin{aligned}
 \left|\left\{x\in\R^d:
 M_\nu F_m(x)>c q^{1-m}\right\}\right|
 &\geq N_m^{d-1}
 \sum_{\substack{k\in\Z\\
       \frac14\log_2N_m\leq k\leq\frac12\log_2N_m}}
 |E_{m,k}|\\
 &\gtrsim N_m^{d-1}\log_2N_m.
 \end{aligned}
$$
Substituting this estimate into \eqref{eq:product-weak-start}
and using \eqref{eq:product-hardy}, we conclude that
$$
 \frac{\|M_\nu F_m\|_{L^{1,\infty}(\R^d)}}
      {\|F_m\|_{H^1(\R^d)}}
 \gtrsim q^{1-m}\log_2N_m
 =q^{m+1}\longrightarrow\infty.
$$
Thus $M_\nu$ is not bounded from $H^1(\R^d)$ to
$L^{1,\infty}(\R^d)$. In particular, it is not of weak type
$(1,1)$. This finishes the proof of Theorem~\ref{thm:main} for $d \ge 2$. 

\begin{rem}\label{rem:singular}
Wright also discusses the case of singular measures in connection
with Problems~14 and~15 in
\cite{GrafakosOliveiraPramanikSeegerStovall2017}.
The preceding construction also gives counterexamples in this class
for every $d\geq3$. Indeed, let $\mu$ be the one-dimensional
measure in \eqref{eq:mixture}, and let $\sigma$ now be normalized
surface measure on $\SSS^{d-2}\subset\R^{d-1}$. Set
$$
\widetilde{\nu}:=\mu\otimes \sigma.
$$
Then $\widetilde{\nu}$ is a compactly supported probability measure carried
by $[1/2,5/2]\times \SSS^{d-2}$, a set of zero $d$-dimensional
Lebesgue measure. Thus $\widetilde{\nu}$ is singular.

By \eqref{eq:fixed-decay} and the standard Fourier decay
estimate for surface measure on the sphere
(see \cite{Stein1993}), we have
$$
 |\widehat{\widetilde{\nu}}(\xi_1,\xi')|
 \lesssim
 (1+|\xi_1|)^{-1/8}
 (1+|\xi'|)^{-(d-2)/2}
 \lesssim
 (1+|(\xi_1,\xi')|)^{-1/8}.
$$
For the same test functions $F_m$ as in \eqref{eq:testhigher}, the atomic
decomposition proving \eqref{eq:product-hardy} remains valid.
Moreover, since $\sigma$ is a probability measure supported
in $[-1,1]^{d-1}$, we still have
$$
 F_m*\widetilde{\nu}_k(x_1,x')
 =a_{N_m}*\mu_k(x_1)
$$
whenever $k$ satisfies \eqref{eq:rangek} and
$x'\in[-N_m/2,N_m/2]^{d-1}$.
The same level-set argument therefore gives
$$
 \frac{\|M_{\widetilde{\nu}} F_m\|_{L^{1,\infty}(\R^d)}}
      {\|F_m\|_{H^1(\R^d)}}
 \gtrsim q^{1-m}\log_2N_m
 =q^{m+1}\longrightarrow\infty.
$$
Hence $M_{\widetilde{\nu}}$ fails to map $H^1(\R^d)$ boundedly into
$L^{1,\infty}(\R^d)$ and, in particular, is not of weak
type $(1,1)$. We leave the remaining details to the
interested reader.
\end{rem}

\bigskip 

\noindent {\bf Acknowledgments.} The author would like to thank
Bingyang Hu for suggesting this problem during his visit to
Shantou University in the summer of 2024 and for many helpful
discussions.

\medskip 

\noindent {\bf AI Disclosure.} The author used ChatGPT for language polishing and proofreading.
The idea of applying Markov's inequality to obtain \eqref{eq:markov}
in the proof of Lemma~\ref{lem:sampling} arose from discussions
with ChatGPT. The author takes full responsibility for the content
and conclusions of this paper.

\end{document}